\algrenewcommand\algorithmicrequire{\textbf{INPUT:}}
\algrenewcommand\algorithmicensure{\textbf{OUTPUT:}}
\definecolor{orange}{rgb}{1,0.5,0}
\newtheorem{theorem}{\bf Theorem}
\newtheorem{lemma}[theorem]{\bf Lemma}
\newtheorem{proposition}[theorem]{\bf Proposition}
\newtheorem{remark}[theorem]{\bf Remark}
\newtheorem{example}[theorem]{\bf Example}
\newtheorem{corollary}[theorem]{\bf Corollary}
\def\bfb{\boldsymbol{b}}
\def\bfx{\boldsymbol{x}}
\def\bfy{\boldsymbol{y}}
\def\bfu{\boldsymbol{u}}
\def\bfv{\boldsymbol{v}}
\def\para{\vspace{1.5mm}}
\begin{document}

\sloppy

\begin{frontmatter}
\title{Nonzero Constant Wronskians of Polynomials and Laurent Polynomials, and Geometric Consequences}

\author[a]{Carlos Hermoso}
\ead{juange.alcazar@uah.es}
\author[a]{Juan Gerardo Alc\'azar}
\ead{juange.alcazar@uah.es}



\address[a]{Departamento de F\'{\i}sica y Matem\'aticas, Universidad de Alcal\'a,
E-28871 Madrid, Spain}
\begin{abstract}
We characterize the polynomials $p_1(t),\ldots,p_n(t)$ whose Wronskian $W(p_1,\ldots,p_n)$ is a nonzero constant. Then, we generalize our results to characterize the Laurent polynomials with the same property. Finally, for rational functions we prove an impossibility result for $n=2$, and pose the case $n\geq 3$ as an open question, although we suggest an impossibility conjecture. Some geometric consequences are derived, especially in the case of polynomials. 
\end{abstract}
\end{frontmatter}


\section{Introduction} 

The Wronskian $W(f_1,\ldots,f_n)$ of $n$ functions is a classical tool related to the linear dependence of functions, commonly appearing when studying systems of Differential Equations: as it is well-known, if the functions are linearly dependent then $W(f_1,\ldots,f_n)$ is identically zero. Furthermore, under some general conditions fulfilled by polynomials, rational functions and in fact analytic functions, the converse is also true \cite{BO}. 

\para
While the case when $W(f_1,\ldots,f_n)=0$ has been well studied, in this paper we ask ourselves whether, and under what conditions, it can happen that $W(f_1,\ldots,f_n)$ is a nonzero constant. We investigate this question for polynomials and Laurent polynomials, i.e. polynomial expressions admitting finitely many terms with negative exponents; we also provide some observations for the case of rational functions. 

\para
This might seem like an odd question to ask, so let us motivate our interest a bit. Given functions $f_1(t),\ldots,f_n(t)$, the expression $\bfy(t)=(f_1(t),\ldots,f_n(t))$ is a parametrization of a curve in dimension $n$. Then the Wronskian of $f_1,\ldots,f_n$ can be written as (we denote by $|\bullet|$ the determinant of $\bullet$)
\[
W(f_1,\ldots,f_n)=W(\bfy(t))=|\bfy(t), \bfy'(t),\ldots,\bfy^{(n-1)}(t)|,
\]
i.e. the determinant whose rows consist of $\bfy(t)$ and the derivatives of $\bfy(t)$ up to $n-1$-th order. This determinant may not sound familiar, but if we replace $\bfy(t):=\bfx'(t)$ then we have 
\begin{equation}\label{motiv}
|\bfx'(t),\bfx''(t),\ldots,\bfx^{(n)}(t)|,
\end{equation}
which appears in Differential Geometry as the numerator of the formula for the curvature, in the case of planar curves, and of the torsion, for space curves; it also appears when one considers curvatures of curves in $n$-space for arbitrary $n$ \cite{S21}. If this expression is constant, then, for instance, we cannot have (real or complex) values of the parameter $t$ where the curvature, for planar curves, or the torsion, for space curves, is zero. 

\para
But, continuing with our motivation, the actual context where we ran into the problem treated in this paper was the question of algorithmically detecting (Euclidean) symmetries of polynomial and rational curves, and the detection of affine and projective equivalences between curves of this kind. For isometries, the expression in Eq. \eqref{motiv}, or more generally its square, is a \emph{differential invariant}, because it does not change when an isometry $f(\bfx)=A\bfx+\bfb$ is applied. Indeed, 
\[
|A\bfx'(t),A\bfx''(t),\ldots,A\bfx^{(n)}|=|A||\bfx'(t),\bfx''(t),\ldots,\bfx^{(n)}(t)|,
\]
so if $|A|=\pm 1$, the square of Eq. \eqref{motiv} stays invariant. This is not true for general affine or projective mappings, but in those cases one can easily generate other invariants from Eq. \eqref{motiv}; for instance, for affine equivalences one can replace one of the derivatives in Eq. \eqref{motiv} by a derivative $\bfx^{(k)}(t)$ with $k\geq n+1$, and then take the quotient with Eq. \eqref{motiv}. 

\para
These ideas, involving Eq. \eqref{motiv} and related invariants, have been used, for instance, in \cite{AHM15} and, more recently, in \cite{AGH24} (see the bibliographies of those papers for a more complete list of references) for algorithmically computing symmetries, affine and projective equivalences of polynomial and rational curves. However, when doing so, the case when Eq. \eqref{motiv} is a constant leads to special situations: in those cases, the above invariants are useless and one needs to, first, identify the geometric conditions when this happens, and then provide ad-hoc solutions for those conditions. It was in this context that we came up with the problem considered here.

\para
In this paper we first investigate the problem for polynomials, and characterize the case when its Wronskian is a nonzero constant. Then we extend our results to Laurent polynomials, where we also provide a characterization; furthermore, the result generalizes naturally to rational functions with just one pole. Finally, for general rational functions, i.e. for rational functions with more than one pole, we prove an impossibility result for $n=2$ and conjecture that this is also the case for $n\geq 3$. Additionally, and mainly for polynomials, we derive some geometric consequences and a connection with the \emph{rational normal curve}, well known in Algebraic Geometry \cite{CC07,C86,V82}. 

\para
The structure of this paper is the following. We address the case of polynomials in Section \ref{back}. The case of Laurent polynomials is treated in Section \ref{laurent}. General rational functions are studied in Section \ref{sec-impossible}. We close with our conclusion and some ideas for further work in Section \ref{sec-conclusion}.

\section*{Acknowledgments}

Juan Gerardo Alc\'azar and Carlos Hermoso are supported by the grant PID2020-113192GB-I00 (Mathematical Visualization: Foundations, Algorithms and Applications) from the Spanish MICINN. Juan G. Alc\'azar and Carlos Hermoso are also members of the Research Group {\sc asynacs} (Ref. {\sc ccee2011/r34}). We also acknowledge here the ideas of the paper \cite{BO}, which in turn inspired some of the techniques and ideas of this paper. 

\section{Constant nonzero Wronskians of polynomials}\label{back}

Let ${\Bbb K}$ be a field of characteristic zero, and let ${\Bbb K}[t]$ be the set of polynomials in the variable $t$ with coefficients in ${\Bbb K}$. Furthermore, for $i=1,\ldots,n$, let $p_i(t)\in {\Bbb K}[t]$,
\[
p_i(t)=a_{i,d_i}t^{d_i}+a_{i,d_i-1}t^{d_i-1}+\cdots +a_{i,0}
\]
where $a_{i,d_i}\neq 0$, so $p_i(t)$ is a polynomial in $t$ of degree $d_i$. The \emph{Wronskian} of $p_1(t),\ldots,p_n(t)$ is the determinant
\begin{equation}\label{wronskian}
W(p_1,\ldots,p_n)=\left\vert \begin{array}{cccc} p_1(t) & p_2(t) & \cdots & p_n(t)\\
p'_1(t) & p'_2(t) & \cdots & p'_n(t)\\
\vdots & \vdots & \ddots & \vdots\\
p_1^{(n-1)}(t) & p_2^{(n-1)}(t) & \cdots & p_n^{(n-1)}(t)
\end{array}
\right\vert,
\end{equation}
with $p_j^{(k)}(t)$ the $k$-th derivative of $p_j(t)$. It is well-known that if $W(p_1,\ldots,p_n)\neq 0$, then $p_1(t),\ldots,p_n(t)$ are linearly independent. In the case of polynomials (and in fact, for a wider class of functions), the converse is also true \cite{BO}. If $W(p_1,\ldots,p_n)$ is a constant, and if the value of the constant is zero, then we can be sure that the $p_i(t)$ are linearly dependent. The question that we ask ourselves is: \emph{what must happen for $W(p_1,\ldots,p_n)$ to be a nonzero constant?} This can certainly happen: for instance, if $p_1(t)=t^2+t$, $p_2(t)=2t^2$, $p_3(t)=t-2$ then
\[
W(p_1,p_2,p_3)=\left \vert \begin{array}{ccc}
t^2+t & 2t^2 & t-2\\
2t+1 & 4t& 1\\
2 & 4 & 0
\end{array} \right\vert = -8.
\]

\para
So let us explore this question. In order to do this, let ${\bf d}=\mbox{max}\{d_1,\ldots,d_n\}$. Then ${\Bbb K}_{\bf d}[t]$, the set of polynomials of degree $\leq {\bf d}$, is a vector space of dimension ${\bf L}={\bf d}+1$. Let us denote by ${\mathcal V}=\langle p_1(t),\ldots,p_n(t)\rangle$ the subspace of ${\Bbb K}_{\bf d}[t]$ spanned by $p_1(t),\ldots,p_n(t)$. Since the $p_i(t)$ are linearly independent, $\mbox{dim}({\mathcal V})=n$; and since ${\mathcal V}\subset {\Bbb K}_{\bf d}[t]$, $n\leq {\bf d}+1$, i.e. ${\bf d}\geq n-1$, which is also a necessary condition for the Wronskian to be a nonzero constant. A first technical result is the following. 

\begin{proposition}\label{prop1}
Let $p_1(t),\ldots,p_n(t)\in {\Bbb K}[t]$, linearly independent, and let ${\mathcal V}=\langle p_1(t),\ldots,p_n(t)\rangle$. Then there exist $q_1(t),\ldots,q_n(t)\in {\Bbb K}[t]$, satisfying that: (i) the maximum of the degrees of the $p_i(t)$ and of the $q_i(t)$ is the same; (ii) the degrees of the $q_i(t)$ are all distinct; (iii) $W(p_1,\ldots,p_n)=W(q_1,\ldots,q_n)$; (iv) $\langle p_1(t),\ldots,p_n(t)\rangle=\langle q_1(t),\ldots,q_n(t)\rangle$. 
\end{proposition}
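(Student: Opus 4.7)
The plan is to reduce $(p_1,\ldots,p_n)$ in place by Gaussian-elimination-style column operations that preserve the span, the linear independence, the Wronskian, and the maximum degree, until the resulting polynomials have pairwise distinct degrees. The key observation is that for any $i\neq j$ and any $c\in{\Bbb K}$, replacing $p_j(t)$ by $p_j(t)-c\,p_i(t)$ and leaving the other polynomials intact preserves all four of these data: the corresponding change-of-basis matrix is triangular with ones on the diagonal, so the span and linear independence are untouched; and by linearity of differentiation this operation subtracts $c$ times the $i$-th column of the matrix in \eqref{wronskian} from its $j$-th column, an elementary column operation that leaves the determinant unchanged.

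Given this observation, I would iterate as follows. Let $S:=\{1,\ldots,n\}$ initially and, at each stage, set ${\bf d}_S:=\max\{\deg p_j : j\in S\}$. Pick any $i\in S$ with $\deg p_i={\bf d}_S$; for every other $j\in S$ with $\deg p_j={\bf d}_S$, subtract from $p_j$ the unique multiple of $p_i$ that kills its leading term. Then remove $i$ from $S$ and repeat. The first pivot has the global maximum degree and is never touched afterwards, so the maximum degree is preserved. At each subsequent stage the new pivot becomes the unique polynomial of its degree, and because later operations only modify polynomials still belonging to $S$, the degree of each pivot is fixed once and for all. Moreover the sequence of pivot degrees is strictly decreasing by construction, so after $n$ stages the $p_i$ have pairwise distinct degrees. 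We then set $q_i:=p_i$ in the original ordering; linear independence is preserved throughout, so in particular no $q_i$ is the zero polynomial.

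Conditions (i) and (ii) now hold by construction, while (iii) and (iv) follow because every individual operation preserves the Wronskian and the span by the key observation above. I do not foresee any serious obstacle; the only point requiring care is that key observation itself, which is immediate from the linearity of differentiation together with the basic behaviour of determinants under elementary column operations.
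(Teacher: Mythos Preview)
Your proposal is correct and follows essentially the same approach as the paper: both perform Gaussian-type elimination on leading coefficients via the column operations $p_j\mapsto p_j-c\,p_i$, noting that these preserve the span and (by linearity of differentiation) the Wronskian. Your bookkeeping with the pivot set $S$ and the observation that the pivot degrees are strictly decreasing is in fact a slightly cleaner way to conclude (ii) than the paper's, which handles the ``two constants'' case by a separate argument invoking the nonvanishing of the Wronskian.
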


\begin{proof}
Let ${\bf d}=\mbox{max}\{d_1,\ldots,d_n\}$, where the degree of $p_i(t)$ is $d_i$. Without loss of generality, assume that $d_i\geq d_j$ for $i\leq j$ (otherwise we just rearrange the $p_i(t)$), so $d_1={\bf d}$. Let $p_1(t),\ldots,p_k(t)$, $k\leq n$, be the polynomials whose degree is equal to ${\bf d}$, and let ${\bf c}_j$ be the $j$-th column, $j\leq k$, of $W(p_1,\ldots,p_n)$, i.e. the column corresponding to $p_j(t)$ and its derivatives. Since the derivative is a linear operation, performing in $W(p_1,\ldots,p_n)$ the substitution 
\[
{\bf c}_j\to {\bf c}_j-\dfrac{a_{j,{\bf d}}}{a_{1,{\bf d}}}{\bf c}_1
\]
is equivalent to replacing $p_j(t)$ by $\tilde{p}_j(t)$, with
\begin{equation}\label{linop}
\tilde{p}_j(t)=p_j(t)-\dfrac{a_{j,{\bf d}}}{a_{1,{\bf d}}}p_1(t).
\end{equation}
In particular, we have that
\[
W(p_1,\ldots,p_n)=W(p_1,\ldots,p_{j-1},\tilde{p}_j,p_{j+1},\ldots,p_n).
\]
After performing this Gauss-like operation for $j=2,\ldots,k$, we reach a list of polynomials $p_1(t),\tilde{p}_2(t),\ldots,\tilde{p}_k(t),p_{k+1}(t),\ldots,p_n(t)$ where we just have one polynomial of degree ${\bf d}$, and whose Wronskian coincides with $W(p_1,\ldots,p_n)$. Continuing this process with the highest degree $\leq {\bf d}-1$, etc. we eventually reach a list of polynomials $q_1(t)=p_1(t),q_2(t)=\tilde{p}_2(t),\ldots,q_n(t)$, with $W(p_1,\ldots,p_n)=W(q_1,\ldots,q_n)$. Two of these polynomials cannot have the same degree $m>0$, because in that case we just continue the process. And two of these polynomials cannot be constant, either: indeed, none of these polynomials can be the zero polynomial since otherwise the Wronskian is zero, contrary to the assumption that the $p_i(t)$ are linearly independent; additionally, if two $q_i$ are constant, then the corresponding columns in $W(q_1,\ldots,q_n)$ are proportional, so $W(q_1,\ldots,q_n)=0$, which again contradicts the linear independence of the $p_i(t)$. Finally, notice that the $q_i(t)$ have been constructed by means of linear combinations of the $p_i(t)$, and therefore $\langle p_1(t),\ldots,p_n(t)\rangle=\langle q_1(t),\ldots,q_n(t)\rangle$. 
\end{proof}

Therefore, if $p_1(t),\ldots,p_n(t)$ have a nonzero constant Wronskian, then we can find $q_1(t),\ldots,q_n(t)$, with distinct degrees, whose Wronskian has the same value. The following step is to make precise the values of the degrees of the $q_i(t)$. In order to do this we first need the following proposition, related to the degree of the Wronskian: notice that since the entries of the Wronskian are polynomials in the variable $t$, the Wronskian itself must also be a polynomial in $t$ (possibly constant). 

\begin{proposition}\label{propdeg}
Let $q_1(t),\ldots,q_n(t)\in {\Bbb K}[t]$, and for $i=1,\ldots,n$ let $d_i$ be the degree of $q_i(t)$. Assume that all the $d_i$ are distinct. Then the degree of $W(q_1,\ldots,q_n)$ is 
\begin{equation}\label{deg}
\tau=d_1+\cdots+d_n-{n\choose 2}.
\end{equation}
\end{proposition}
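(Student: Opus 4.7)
The plan is to expand the Wronskian via the Leibniz formula and track the leading term. Reordering the $q_i$ only changes $W$ by a sign, not its degree, so I will assume $d_1<d_2<\cdots<d_n$; in particular $d_i\ge i-1$ for every $i$. Expanding
\[
W(q_1,\ldots,q_n)=\sum_{\sigma\in S_n}\mathrm{sgn}(\sigma)\prod_{i=1}^{n}q_{\sigma(i)}^{(i-1)}(t),
\]
I would first observe that $q_j^{(i-1)}$ has degree $d_j-(i-1)$ when $d_j\ge i-1$ and is the zero polynomial otherwise. Whenever the $\sigma$-summand is nonzero, its degree equals
\[
\sum_{i=1}^{n}\bigl(d_{\sigma(i)}-(i-1)\bigr)=\sum_{j=1}^{n}d_j-\binom{n}{2}=\tau,
\]
and the identity permutation already gives a nonzero contribution, so $W$ is a sum of polynomials each of degree at most $\tau$.

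It then remains to show that the coefficient of $t^\tau$ in $W$ does not vanish. The leading coefficient of $q_j^{(i-1)}$ is $a_{j,d_j}\cdot d_j(d_j-1)\cdots(d_j-i+2)$, with the convention that this equals zero when $d_j<i-1$. Factoring $\prod_j a_{j,d_j}$ out of every Leibniz term, the coefficient of $t^\tau$ in $W$ becomes
\[
\Bigl(\prod_{j=1}^{n}a_{j,d_j}\Bigr)\cdot \det(M),\qquad M_{ij}=d_j(d_j-1)\cdots(d_j-i+2).
\]
Here $M_{ij}$ is a polynomial in $d_j$ of degree exactly $i-1$ with leading coefficient $1$. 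Performing row operations (subtracting appropriate combinations of earlier rows to kill the lower-order terms) converts $M$ into the Vandermonde matrix with entries $d_j^{i-1}$ while preserving the determinant, so
\[
\det(M)=\prod_{1\le i<j\le n}(d_j-d_i).
\]

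Because the $d_j$ are pairwise distinct and the $a_{j,d_j}$ are nonzero by hypothesis, this leading coefficient is nonzero, hence $\deg W=\tau$, as claimed. The only nonroutine step is ruling out cancellation among the Leibniz terms, and the decisive observation is the identification of the leading-coefficient matrix with a Vandermonde-type determinant; the distinctness of the $d_i$ then enters exactly at the right place, guaranteeing that the Vandermonde product does not vanish.
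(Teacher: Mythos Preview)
Your proof is correct and follows essentially the same route as the paper: both isolate the coefficient of $t^{\tau}$ in $W(q_1,\ldots,q_n)$ as $\bigl(\prod_j a_{j,d_j}\bigr)$ times a Vandermonde-type determinant in the $d_j$, which is nonzero precisely because the degrees are distinct. The only difference is packaging---the paper expands by multilinearity in the columns and then quotes Lemma~1 of \cite{BO} for the Wronskian of the leading monomials, whereas you use the Leibniz expansion and reduce the falling-factorial matrix to Vandermonde by hand; the substance is identical.
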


\begin{proof}
Writing $q_i(t)=b_{i,d_i}t^{d_i}+\cdots$, $b_{i,d_i}\neq 0$, and expanding the determinant $W(q_1,\ldots,q_n)$ as a sum of determinants, the term of highest degree in $t$ of $W(q_1,\ldots,q_n)$ corresponds to $W(r_1,\ldots,r_n)$, where $r_i(t)=b_{i,d_i}t^{d_i}$. By Lemma 1 of \cite{BO}, we get that 
\[
W(r_1,\ldots,r_n)=V(d_1,\ldots,d_n)t^\tau \prod_{i=1}^n b_{i,d_i},
\]
where $V(d_1,\ldots,d_n)=\prod_{1\leq i<j\leq n}(d_j-d_i)$, i.e. the Vandermonde determinant whose $j$-th column is $[1,d_j,\ldots,d_j^{n-1}]^T$. Since the $d_i$ are all distinct, $V(d_1,\ldots,d_n)\neq 0$, and the result follows. 
\end{proof}

\begin{corollary}\label{cor1}
Let $q_1(t),\ldots,q_n(t)\in {\Bbb K}[t]$, and for $i=1,\ldots,n$ let $d_i$ be the degree of $q_i(t)$. Assume that $d_1>d_2>\ldots>d_n$ (so in particular all $d_i$ are distinct). If $W(q_1,\ldots,q_n)$ is a nonzero constant, then $d_1=n-1, d_2=n-2, \ldots,d_{n-1}=1, d_n=0$. 
\end{corollary}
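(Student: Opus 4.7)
The plan is to apply Proposition \ref{propdeg} directly and then use an elementary integer estimate. Since $W(q_1,\ldots,q_n)$ is a nonzero constant, its degree as a polynomial in $t$ equals $0$. Proposition \ref{propdeg} (whose hypothesis on distinct degrees is satisfied here) gives the degree as $\tau = d_1+\cdots+d_n - \binom{n}{2}$, so the first step is to deduce the single numerical identity
\[
d_1+d_2+\cdots+d_n=\binom{n}{2}.
\]

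The second step is to use the hypothesis $d_1>d_2>\cdots>d_n$ together with the fact that degrees are nonnegative integers. From $d_n\geq 0$ and the strict decrease, an easy induction (or just observing that a strictly decreasing sequence of nonnegative integers of length $n$ is lower-bounded coordinate-wise by $n-1, n-2,\ldots,1,0$) yields $d_i\geq n-i$ for every $i\in\{1,\ldots,n\}$. Summing these inequalities gives
\[
\sum_{i=1}^n d_i \;\geq\; \sum_{i=1}^n (n-i) \;=\; \binom{n}{2},
\]
with equality if and only if $d_i=n-i$ for all $i$.

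Combining the two steps, the identity $\sum d_i=\binom{n}{2}$ forces equality in the inequality, so $d_1=n-1,\, d_2=n-2,\,\ldots,\,d_n=0$, as claimed. There is no real obstacle here: the argument is a one-line consequence of Proposition \ref{propdeg} plus the standard minimum-sum property of strictly decreasing sequences of nonnegative integers. The only small subtlety to keep in mind is that ``$W$ is a nonzero constant'' must be translated as ``$\deg W = 0$'' (not $\deg W \leq 0$ with the $-\infty$ convention for the zero polynomial), which is exactly what the nonzero hypothesis guarantees.
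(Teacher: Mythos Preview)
Your proof is correct and follows essentially the same approach as the paper's: both invoke Proposition \ref{propdeg} to obtain $\sum_i d_i=\binom{n}{2}$ and then use that a strictly decreasing sequence of $n$ nonnegative integers satisfies $d_i\geq n-i$. The paper phrases the second step as a contradiction (assuming $d_1\geq n$ and deducing $\tau>0$), whereas you state the inequality $d_i\geq n-i$ directly and read off the equality case; your presentation is slightly cleaner but the underlying argument is the same.
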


\begin{proof}
Since the $d_i$ are all distinct, by Proposition \ref{propdeg} the degree $\tau$ in Eq. \eqref{deg} satisfies that 
\[
\tau=(d_1+\cdots +d_n)-{n\choose 2}.
\]
Since 
\[
{n \choose 2}=1+2+\cdots +(n-2)+(n-1),
\]
we can rearrange $\tau$ in the following way:
\begin{equation}\label{rearrange}
\tau=(d_1-(n-1))+(d_2-(n-2))+\cdots +(d_{n-1}-1)+d_n.
\end{equation}
Assume now that $d_1\geq n$. Then we have that $d_2\geq n-2$: indeed, if $d_2<n-2$ we cannot extract $n-1$ distinct values for $d_2,\ldots,d_{n-1,d_n}$ from the set $\{0,1,\ldots,n-3\}$, because $\{0,1,\ldots,n-3\}$ consists of $n-2$ distinct elements. Analogously we can see that $d_j\geq n-j$ for $j=3,\ldots,n-1$. Therefore, $d_1\geq n$ implies that $d_1-(n-1)>0$, and $d_j\geq n-j$ for $j=2,\ldots,n-1$ implies that $d_j-(n-j)\geq 0$ for $j=2,\ldots,n-1$; additionally, $d_n\geq 0$. Substituting in Eq. \eqref{rearrange} we get that $\tau>0$, so $W(q_1,\ldots,q_n)$ is not constant, which is a contradiction. Thus, we conclude that $d_1\leq n-1$. Since we have $n$ distinct $d_i$, we get that $d_1=n-1$, and once again because all the $d_i$ are distinct, the result follows. 
\end{proof}

Finally we can characterize the desired property. 

\begin{theorem}\label{th-main}
Let $p_1(t),\ldots,p_n(t)\in {\Bbb K}[t]$, linearly independent. Then $W(p_1,\ldots,p_n)$ is a nonzero constant if and only if we have that 
\begin{equation}\label{eq-th}
\begin{bmatrix}
p_1(t) \\
p_2(t)\\
\vdots \\
p_n(t)
\end{bmatrix}=A\cdot  
\begin{bmatrix}
1 \\
t\\
\vdots \\
t^{n-1}
\end{bmatrix}
\end{equation}
where $A$ is a nonsingular matrix. 
\end{theorem}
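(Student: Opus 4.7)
The plan is to prove both implications, using the previous results for the forward direction and a direct computation for the converse.

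For the \emph{only if} direction, suppose $W(p_1,\ldots,p_n)$ is a nonzero constant. Proposition \ref{prop1} produces polynomials $q_1(t),\ldots,q_n(t)$ with pairwise distinct degrees such that $W(q_1,\ldots,q_n)=W(p_1,\ldots,p_n)$ and $\langle q_1,\ldots,q_n\rangle=\langle p_1,\ldots,p_n\rangle$. After reordering so that $d_1>d_2>\cdots>d_n$, Corollary \ref{cor1} forces $d_i=n-i$ for $i=1,\ldots,n$. Hence $q_1(t),\ldots,q_n(t)$ are $n$ linearly independent elements of ${\Bbb K}_{n-1}[t]$, a space of dimension $n$, so they span it. Therefore $\langle p_1,\ldots,p_n\rangle=\langle 1,t,\ldots,t^{n-1}\rangle$, which means each $p_i(t)$ is a linear combination of $1,t,\ldots,t^{n-1}$; collecting the coefficients gives the matrix $A$ in Eq. \eqref{eq-th}, which is nonsingular because the $p_i(t)$ are linearly independent.

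For the \emph{if} direction, assume Eq. \eqref{eq-th} holds with $A$ nonsingular. I would show directly that $W(p_1,\ldots,p_n)=\left(\prod_{k=0}^{n-1}k!\right)\det(A)$, which is a nonzero constant. The key observation is that the Wronskian matrix $M$, with entries $M_{ij}=p_j^{(i-1)}(t)$, factors as $M=T\cdot A^T$, where $T$ is the upper-triangular matrix with entries $T_{i,k}=\frac{(k-1)!}{(k-i)!}t^{k-i}$ for $k\geq i$ and $T_{i,k}=0$ otherwise. This follows by writing $p_j(t)=\sum_{k=1}^n (A^T)_{k,j}\,t^{k-1}$ and differentiating $i-1$ times. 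Since $T$ is upper triangular with diagonal entries $0!,1!,\ldots,(n-1)!$ (in particular independent of $t$), taking determinants yields the claimed value.

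The main obstacle is a bookkeeping one: making the matrix factorization $M=T\cdot A^T$ precise, being careful with the indexing of derivatives and monomials so that the triangular factor $T$ really has constant diagonal entries and all the $t$-dependence collapses. Everything else is a direct application of the earlier results. Once this factorization is cleanly stated, the two implications combine to give the characterization: the polynomials with constant nonzero Wronskian are exactly the linearly independent tuples inside the $(n-1)$-degree part of ${\Bbb K}[t]$.
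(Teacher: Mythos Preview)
Your proof is correct and follows essentially the same route as the paper: the forward direction invokes Proposition~\ref{prop1} and Corollary~\ref{cor1} to pin down the span $\langle p_1,\ldots,p_n\rangle=\langle 1,t,\ldots,t^{n-1}\rangle$, exactly as in the paper (which phrases it as a product $A=P_1P_2$ of two change-of-basis matrices). For the converse the paper quotes Lemma~2 of \cite{BO} for the identity $W(p_1,\ldots,p_n)=|A|\,W(1,t,\ldots,t^{n-1})$ and states $W(1,t,\ldots,t^{n-1})=\prod_{k<n}k!$, whereas your factorization $M=T\cdot A^{T}$ with $T$ upper-triangular and diagonal $0!,1!,\ldots,(n-1)!$ is precisely a self-contained proof of both facts at once.
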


\begin{proof}
$(\Rightarrow)$ Assume that $W(p_1,\ldots,p_n)$ is a nonzero constant. By Proposition \ref{prop1}, there exist $q_1(t),\ldots,q_n(t)$ with distinct degrees, such that $W(p_1,\ldots,p_n)=W(q_1,\ldots,q_n)$ and such that $\langle p_1(t),\ldots,p_n(t)\rangle=\langle q_1(t),\ldots,q_n(t)\rangle$. This last condition implies that 
\begin{equation}\label{eq1}
\begin{bmatrix}
p_1(t) \\
p_2(t)\\
\vdots \\
p_n(t)
\end{bmatrix}=P_1\cdot  
\begin{bmatrix}
q_1(t) \\
q_2(t)\\
\vdots \\
q_n(t)
\end{bmatrix}
\end{equation}
with $P_1$ a nonsingular matrix. By Corollary \ref{cor1}, the degrees of $q_1(t),q_2(t),\ldots,q_n(t)$ are $n-1,n-2,\ldots,0$, and therefore 
\begin{equation}\label{eq2}
\begin{bmatrix}
q_1(t) \\
q_2(t)\\
\vdots \\
q_n(t)
\end{bmatrix}=
P_2\cdot  
\begin{bmatrix}
1 \\
t\\
\vdots \\
t^{n-1}
\end{bmatrix}
\end{equation}
with $P_2$ a nonsingular matrix. By putting together Eq. \eqref{eq1} and Eq. \eqref{eq2}, we get Eq. \eqref{eq-th}, with $A=P_1\cdot P_2$. 

$(\Leftarrow)$ One can directly verify that $W(1,t,\ldots,t^{n-1})=2!\cdot 3!\cdots (n-1)!\neq 0$. If Eq. \eqref{eq-th} holds, then (see Lemma 2 in \cite{BO}) we have that $W(p_1,\ldots,p_n)=|A|\cdot W(1,t,\ldots,t^{n-1})=|A|\cdot 2!\cdot 3!\cdots (n-1)!$. Since $A$ is nonsingular, $|A|\neq 0$ and $W(p_1,\ldots,p_n)$ is a nonzero constant. 
\end{proof}

\subsection{Geometric consequences}\label{geom}

Let us translate Theorem \ref{th-main} into Geometry. In order to do this, let $\bfy(t)=(p_1(t),p_2(t),\ldots,p_n(t))$ define a (polynomial) parametrization of a curve ${\mathcal C}\subset {\Bbb K}^n$, where $p_1(t),\ldots,p_n(t)$ satisfy Theorem \ref{th-main}. Then Theorem \ref{th-main} is saying that ${\mathcal C}$ is the affine image of the curve parametrized by $\bfv(t)=(1,t,\ldots,t^{n-1})$, which is obviously contained in the hyperplane $x_1=1$. Since affine mappings preserve the degree of parametrizations, and map hyperplanes onto hyperplanes, we get a first geometric consequence of Theorem \ref{th-main}.

\begin{theorem}\label{before}
Let $\bfy(t)=(p_1(t),p_2(t),\ldots,p_n(t))$ be a polynomial parametrization of ${\mathcal C}\subset {\Bbb K}^n$ with $W(p_1,...,p_n) \neq 0$ and let ${\bf d}$ be the degree of $\bfy(t)$. Then $W(p_1,\ldots,p_n)$ is a nonzero constant if and only if ${\bf d}=n-1$. Furthermore, in that case ${\mathcal C}$ is contained in a hyperplane. 
\end{theorem}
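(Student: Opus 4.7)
The plan is to derive this theorem directly from Theorem \ref{th-main} together with a short dimension count, and then to read off the hyperplane statement from the structural representation in Eq. \eqref{eq-th}. Throughout, the hypothesis $W(p_1,\ldots,p_n)\neq 0$ is used only to guarantee that $p_1,\ldots,p_n$ are linearly independent.

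For the forward direction, I would suppose that $W(p_1,\ldots,p_n)$ is a nonzero constant. Theorem \ref{th-main} then supplies a nonsingular matrix $A$ with $(p_1(t),\ldots,p_n(t))^T=A\cdot(1,t,\ldots,t^{n-1})^T$, so each $p_i$ is a ${\Bbb K}$-linear combination of $1,t,\ldots,t^{n-1}$; hence $\deg p_i\leq n-1$ and ${\bf d}\leq n-1$. To get the opposite inequality I would observe that ${\Bbb K}_{n-2}[t]$ has dimension $n-1$, so $n$ linearly independent polynomials cannot all sit inside it; some $p_i$ must therefore have degree at least $n-1$, which yields ${\bf d}\geq n-1$. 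Combining these bounds gives ${\bf d}=n-1$.

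For the converse, I would assume ${\bf d}=n-1$. Every $p_i$ then lies in the $n$-dimensional space ${\Bbb K}_{n-1}[t]$, and the linear independence coming from $W\neq 0$ makes $\{p_1,\ldots,p_n\}$ a basis of ${\Bbb K}_{n-1}[t]$. Expanding each $p_i$ in the monomial basis $\{1,t,\ldots,t^{n-1}\}$ yields a nonsingular matrix $A$ realising Eq. \eqref{eq-th}, and Theorem \ref{th-main} lets me conclude that $W(p_1,\ldots,p_n)$ is a nonzero constant.

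For the last assertion I would reuse the factorization $\bfy(t)^T=A\cdot(1,t,\ldots,t^{n-1})^T$: the auxiliary curve $\bfv(t)=(1,t,\ldots,t^{n-1})$ visibly lies in the hyperplane $\{x_1=1\}\subset{\Bbb K}^n$, and applying the invertible linear map $A$ sends this hyperplane to a (possibly translated) hyperplane of ${\Bbb K}^n$ that contains $\bfy(t)$, so ${\mathcal C}$ is contained in a hyperplane. I do not expect any real obstacle here: the whole statement is a geometric repackaging of Theorem \ref{th-main}, and the only step requiring even minor attention is the small dimension-counting argument producing ${\bf d}\geq n-1$.
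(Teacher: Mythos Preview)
Your proposal is correct and follows essentially the same route as the paper. The paper does not give a formal proof of this theorem but precedes it with a short paragraph observing that Theorem \ref{th-main} exhibits ${\mathcal C}$ as the image of $\bfv(t)=(1,t,\ldots,t^{n-1})$ under the invertible linear map $A$, and then invokes the facts that such maps preserve degrees of parametrizations and send hyperplanes to hyperplanes; your argument fleshes out exactly these steps, with a clean dimension-count for the lower bound ${\bf d}\geq n-1$ and an explicit treatment of the converse direction that the paper leaves implicit.
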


\begin{corollary}\label{now}
Let $\bfy(t)=(p_1(t),p_2(t),\ldots,p_n(t))$ be a polynomial parametrization of ${\mathcal C}\subset {\Bbb K}^n$. If $W(p_1,\ldots,p_n)$ is a constant (possibly zero) then ${\mathcal C}$ is contained in a hyperplane. 
\end{corollary}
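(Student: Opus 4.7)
The plan is to split into two cases according to whether the constant value of $W(p_1,\ldots,p_n)$ is nonzero or zero, and dispatch each case using a result already at hand.

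First I would observe that if $W(p_1,\ldots,p_n)$ is a nonzero constant, then by Theorem \ref{before} the curve $\mathcal{C}$ is contained in a hyperplane, and we are done.

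The remaining case is $W(p_1,\ldots,p_n)\equiv 0$. Here I would invoke the classical converse to the vanishing Wronskian criterion, valid for polynomials (and more generally for analytic functions under mild conditions), as recalled in the introduction and in \cite{BO}: if $W(p_1,\ldots,p_n)=0$ then $p_1(t),\ldots,p_n(t)$ are linearly dependent over $\mathbb{K}$. Thus there exist constants $c_1,\ldots,c_n\in \mathbb{K}$, not all zero, such that $c_1 p_1(t)+\cdots+c_n p_n(t)=0$ identically in $t$. Evaluating at any parameter value, every point $\bfy(t)=(p_1(t),\ldots,p_n(t))$ of $\mathcal{C}$ satisfies the linear equation $c_1 x_1+\cdots+c_n x_n=0$, which defines a hyperplane of $\mathbb{K}^n$; hence $\mathcal{C}$ is contained in this hyperplane.

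Combining the two cases yields the statement. I do not expect any genuine obstacle: the nonzero constant case is immediate from Theorem \ref{before}, and the zero case is essentially a one-line appeal to the known converse of the Wronskian linear-dependence criterion for polynomials, so the only thing worth being careful about is to cite this converse explicitly rather than silently assuming it.
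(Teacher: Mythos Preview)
Your proposal is correct and matches the paper's proof essentially line for line: the paper also splits into the zero and nonzero constant cases, handling the zero case via linear dependence of the $p_i$ (so $\mathcal{C}$ lies in $\alpha_1 x_1+\cdots+\alpha_n x_n=0$) and the nonzero case by invoking Theorem~\ref{before}.
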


\begin{proof}
If $W(p_1,\ldots,p_n)$ is identically zero then $p_1(t),\ldots,p_n(t)$ are linearly dependent, i.e. there exists $\alpha_1,\ldots,\alpha_n\in {\Bbb K}$, not all of them zero, such that $\alpha_1p_1(t)+\cdots +\alpha_np_n(t)=0$. Therefore, ${\mathcal C}$ is contained in the plane $\alpha_1x_1+\cdots +\alpha_nx_n=0$. If $W(p_1,\ldots,p_n)$ is a nonzero constant, it follows from Theorem \ref{before}.
\end{proof}

\begin{remark}
Notice that the converse of Corollary \ref{now} is not true. For instance, the curve $\bfy(t)=(t^3,t^3+t^2,t^2-2)$ is contained in the plane $x-y+z+2=0$; however, $W(t^3,t^3+t^2,t^2-2)=12t^2$. 
\end{remark}

Additionally, we can give two more geometric consequences of Theorem \ref{th-main} by recalling the \emph{rational normal curve} in dimension $n$, namely the curve in ${\Bbb K}^n$ parametrized by $\bfu(t)=(t,t^2,\ldots,t^n)$. This curve is well-known in Algebraic Geometry, with results going back to Castelnuovo \cite{C86} and Veronese \cite{V82} (see also the Introduction to \cite{CC07}), and can be found in many basic textbooks of Algebraic Geometry as a canonical example of a curve which is not a complete intersection for $n\geq 3$. If we compute the \emph{hodograph}, i.e. the first derivative, of $\bfu(t)=(t,t^2,\ldots,t^n)$, we get $\bfv(t)=\bfu'(t)=(1,2t,\ldots,nt^{n-1})$, so
\[
\bfu'(t)=\begin{bmatrix}
1 & 0 & \cdots & 0\\
0 & 2 & \cdots & 0\\
0 & 0 & \cdots & n
\end{bmatrix}
\begin{bmatrix}
1\\
t\\
\vdots \\
t^{n-1}
\end{bmatrix}.
\]

Thus, we get the following result, which follows directly from Theorem \ref{th-main}.

\begin{theorem} \label{th2}
The rational normal curve is the only polynomial curve, up to affine isomorphism, whose hodograph has a constant nonzero Wronskian.
\end{theorem}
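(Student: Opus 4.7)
The plan is to apply Theorem \ref{th-main} directly to the components of the hodograph and then integrate. Let $\bfy(t) = (p_1(t),\ldots,p_n(t))$ be a polynomial curve whose hodograph $\bfy'(t) = (p'_1(t),\ldots,p'_n(t))$ has a nonzero constant Wronskian. Since the Wronskian of $p'_1,\ldots,p'_n$ being a nonzero constant forces these derivatives to be linearly independent, Theorem \ref{th-main} applies and yields a nonsingular matrix $A$ such that
\[
\begin{bmatrix} p'_1(t) \\ p'_2(t) \\ \vdots \\ p'_n(t) \end{bmatrix} = A \begin{bmatrix} 1 \\ t \\ \vdots \\ t^{n-1} \end{bmatrix}.
\]

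Next, I would integrate componentwise. Since integration is a linear operation, we obtain
\[
\begin{bmatrix} p_1(t) \\ p_2(t) \\ \vdots \\ p_n(t) \end{bmatrix} = A \cdot D \begin{bmatrix} t \\ t^2 \\ \vdots \\ t^n \end{bmatrix} + \bfc,
\]
where $D = \mathrm{diag}(1, 1/2, \ldots, 1/n)$ absorbs the integration constants from the monomials, and $\bfc \in {\Bbb K}^n$ collects the constants of integration. The matrix $A\cdot D$ is nonsingular because both $A$ and $D$ are. This exhibits $\bfy(t)$ as the affine image of the rational normal curve $\bfu(t) = (t,t^2,\ldots,t^n)$ under the affine map $\bfx \mapsto (A D)\bfx + \bfc$, proving one direction.

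For the converse, suppose $\bfy(t) = B \bfu(t) + \bfc$ with $B$ nonsingular. Then $\bfy'(t) = B \bfu'(t) = B \cdot \mathrm{diag}(1,2,\ldots,n) \cdot (1,t,\ldots,t^{n-1})^T$, and since $B \cdot \mathrm{diag}(1,2,\ldots,n)$ is nonsingular, Theorem \ref{th-main} (the $(\Leftarrow)$ direction) guarantees that the Wronskian of the components of $\bfy'(t)$ is a nonzero constant.

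There is no serious obstacle here; the proof is essentially a one-line reduction to Theorem \ref{th-main} plus termwise integration. The only mild subtlety worth flagging is to make clear that the diagonal rescaling $D$ (arising from integrating $t^{k-1}$ to $t^k/k$) does not spoil nonsingularity, and that the vector $\bfc$ of integration constants is harmless because an affine map is allowed to include a translation. It would be natural to close with the explicit affine map $\bfx \mapsto (AD)\bfx + \bfc$ realizing the isomorphism with the rational normal curve.
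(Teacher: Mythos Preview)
Your proof is correct and follows the same route as the paper: reduce to Theorem~\ref{th-main} applied to the hodograph, and note that the rational normal curve's hodograph is a nonsingular diagonal image of $(1,t,\ldots,t^{n-1})$. The paper merely states that the result ``follows directly from Theorem~\ref{th-main}'' after displaying $\bfu'(t)=\mathrm{diag}(1,2,\ldots,n)\cdot(1,t,\ldots,t^{n-1})^T$, whereas you spell out the integration step and the role of the translation vector $\bfc$ explicitly; this added detail is welcome but does not constitute a different argument.
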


Finally, we can write $W(p_1,\ldots,p_n)$ in terms of $\bfy(t)$ as 
\begin{equation}\label{wrons2}
W(p_1,\ldots,p_n)=\mbox{det}(\bfy(t),\bfy'(t),\ldots,\bfy^{(n-1)}(t)),
\end{equation}
i.e. the determinant whose first row consists of the components of $\bfy(t)$, and whose $j$-th row, $j\geq 1$, consists of the components of the $(j-1)$-th derivative of $\bfy(t)$. We will refer to the right-hand of Eq. \eqref{wrons2} as the \emph{Wronskian of $\bfy(t)$}. As mentioned in the Introduction to the paper, replacing $\bfy(t):=\bfx'(t)$ in Eq. \eqref{wrons2} yields the determinant $\mbox{det}(\bfx'(t),\bfx''(t))$ and $\mbox{det}(\bfx'(t),\bfx''(t),\bfx'''(t))$, appearing as the numerator of the formulas for the \emph{curvature}, in ${\Bbb K}^2$, and the \emph{torsion}, in ${\Bbb K}^3$. Then we get the last consequence of Theorem \ref{th-main}. 

\begin{theorem} \label{th3}
Let ${\mathcal C}\subset {\Bbb R}^n$, with $n=2$ or $n=3$, be polynomially parametrized by $\bfx(t)$. 
\begin{itemize}
\item [(i)] If ${\mathcal C}\subset {\Bbb R}^2$ and $\bfx(t)$ does not have any (real or complex) points with vanishing curvature, then $\bfx(t)$ is an affine image of the rational normal curve in ${\Bbb R}^2$.
\item [(ii)]If ${\mathcal C}\subset {\Bbb R}^3$ and $\bfx(t)$ does not have any (real or complex) points with vanishing torsion, then $\bfx(t)$ is an affine image of the rational normal curve in ${\Bbb R}^3$. 
\end{itemize}
\end{theorem}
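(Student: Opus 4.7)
The plan is to reduce both parts to a direct application of Theorem \ref{th-main} by observing that the curvature or torsion numerator is precisely the Wronskian of the components of $\bfy(t) := \bfx'(t)$.

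For part (i), I would begin from the fact that the numerator of the curvature of a planar polynomial curve is $\det(\bfx'(t),\bfx''(t)) \in \RR[t]$. The hypothesis that there are no (real or complex) points of vanishing curvature says this polynomial has no roots in $\CC$, so by the fundamental theorem of algebra it must be a nonzero constant. Writing $\bfy(t) = \bfx'(t) = (p_1(t),p_2(t))$, this determinant equals $W(p_1,p_2)$, so Theorem \ref{th-main} applies and yields $\bfy(t) = A \cdot (1,t)^T$ for some nonsingular $A$. Integrating component-wise gives $\bfx(t) = (A \cdot \mathrm{diag}(1,1/2)) \cdot (t,t^2)^T + \bfb$ for some constant $\bfb \in \RR^2$; since $A \cdot \mathrm{diag}(1,1/2)$ is nonsingular, this exhibits $\bfx(t)$ as an affine image of the rational normal curve $\bfu(t) = (t,t^2)$ in $\RR^2$.

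Part (ii) will follow by exactly the same argument applied to the torsion numerator $\det(\bfx'(t),\bfx''(t),\bfx'''(t)) \in \RR[t]$, with $\bfy(t) = \bfx'(t) = (p_1(t),p_2(t),p_3(t))$. The fundamental theorem of algebra again turns the hypothesis into a nonzero constant Wronskian; Theorem \ref{th-main} gives $\bfy(t) = A \cdot (1,t,t^2)^T$ with $A$ nonsingular; and termwise integration produces $\bfx(t) = (A \cdot \mathrm{diag}(1,1/2,1/3)) \cdot (t,t^2,t^3)^T + \bfb$, i.e.\ an affine image of the rational normal curve in $\RR^3$.

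I do not anticipate a genuine obstacle. The two ingredients beyond Theorem \ref{th-main} are the fundamental theorem of algebra (turning ``no complex zeros'' into ``nonzero constant'') and the trivial observation that integrating $A\cdot(1,t,\ldots,t^{n-1})^T$ yields, up to a diagonal rescaling that can be absorbed into $A$ and a translation by the constant of integration, an affine image of $(t,t^2,\ldots,t^n)$. The substantive work has already been carried out in Theorem \ref{th-main}.
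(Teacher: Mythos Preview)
Your proposal is correct and follows exactly the line of reasoning the paper intends: the paper states Theorem~\ref{th3} as an immediate consequence of Theorem~\ref{th-main} (via Theorem~\ref{th2}) without writing out a proof, and your argument spells out precisely those details---fundamental theorem of algebra to force the curvature/torsion numerator to be a nonzero constant, identification of that numerator with $W(\bfx'(t))$, application of Theorem~\ref{th-main}, and termwise integration. There is nothing to add.
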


\begin{remark}
The value of the Wronskian is related to the parametrization $\bfy(t)$ that we consider, i.e. if we consider a reparametrization $\tilde{\bfy}(t)=(\bfy\circ \phi)(t)$, it is not true in general that $W(\tilde{\bfy}(t))$ is the result of composing $\phi(t)$ with $W(\bfy(t))$. For instance, the Wronskian of $\bfy(t)=(t,t^2-t,t^2+1)$ is $W(\bfy(t))=-2$; however, for $\tilde{\bfy}(t)=(\bfy\circ \phi)(t)=(t^2,t^4-t^2,t^4+1)$, where $\phi(t)=t^2$, we have that $W(\tilde{\bfy}(t))=16t^3$. If $\phi(t)=t+\alpha$, though, then it is definitely true that $W(\tilde{\bfy}(t))=W(\bfy(t))$.  
\end{remark}

\section{Generalization to Laurent Polynomials} \label{laurent}

A \emph{Laurent polynomial} is an expression
\begin{equation}\label{laupol}
p(t)=a_\ell t^\ell+\cdots +a_1 t+ a_0+a_{-1}t^{-1}+\cdots +a_{-k} t^{-k},
\end{equation}
where $\ell,k\in {\Bbb N}\cup \{0\}$. Thus, we are just considering polynomial expressions plus finitely many monomial terms with negative exponents. Laurent polynomials with coefficients in ${\Bbb K}$ form a ring, that we denote by ${\Bbb K}[t,t^{-1}]$. 

\begin{example}
Laurent polynomials with nonzero constant wronskian certainly exist. For instance, 
\[
W\left(t^6,\dfrac{1}{t},\dfrac{1}{t^3}\right)=
\left\vert \begin{array}{ccc}
t^6 & 1/t & 1/t^2\\
6t^5 & -1/t^2 & -2/t^3\\
30 t^4 & 2/t^3 & 6/t^4
\end{array}
\right\vert = -56.
\]
\end{example}

Let us see whether the results of Section \ref{back} are also applicable here. First, it will be more convenient to use, instead of Eq. \eqref{laupol}, the following notation for the Laurent polynomials $p_1(t),\ldots,p_n(t)$: 
\begin{equation}\label{laupol2}
p_i(t)=\sum_{j=M_i}^{N_i} a_j t^{d_j},
\end{equation}
with $M_i\leq N_i$, $M_i,N_i\in {\Bbb Z}$, and at least one $M_k$ satisfies that $M_k<0$ (otherwise we just have polynomials). We will refer to $M_i$ as the {\it minimum} degree of $p_i(t)$, which corresponds to the term of least degree of $p_i(t)$, and we will refer to $N_i$ as the {\it maximum} degree of $p_i(t)$, which corresponds to the term of highest degree of $p_i(t)$. Let 
\[
{\bf m}=\mbox{min}(M_1,\ldots,M_\ell),\mbox{ }{\bf n}=\mbox{max}(N_1,\ldots,N_\ell).
\]
We will refer to ${\bf m}$ (resp. ${\bf n}$) as the {\it minimum} (resp. {\it maximum}) {\it degree of the set} $\{p_1(t),\ldots,p_n(t)\}$. Then $p_1(t),\ldots,p_n(t)$ span a subspace ${\mathcal V}=\langle p_1(t),\ldots,p_n(t)\rangle$ of the vector space ${\Bbb K}_{\bf L}[t,t^{-1}]$, of dimension ${\bf L}={\bf n}-{\bf m}+1$, generated by the monomials $t^r$, where ${\bf m}\leq r \leq {\bf n}$; notice that at least one of these monomials has a negative exponent, i.e. ${\bf m}<0$. Now if we have $p_1(t),\ldots,p_n(t)$ Laurent polynomials such that $W(p_1,\ldots,p_n)$ is a nonzero constant, in particular $p_1,\ldots,p_n$ must be linearly independent as vectors of ${\Bbb K}_{\bf L}[t,t^{-1}]$, so $n\leq {\bf L}$. Additionally, Proposition \ref{prop1} also holds here; the proof works exactly the same, but now we need to take care of both the minimum and the maximum degrees of the polynomials. Thus, we get the following result. 

\begin{proposition}\label{prop2}
Let $p_1(t),\ldots,p_n(t)\in {\Bbb K}[t,t^{-1}]$, linearly independent, and let ${\mathcal V}=\langle p_1(t),\ldots,p_n(t)\rangle$. Then there exist $q_1(t),\ldots,q_n(t)\in {\Bbb K}[t,t^{-1}]$ (resp. $\tilde{q}_1,\ldots,\tilde{q}_n(t)\in {\Bbb K}[t,t^{-1}]$) satisfying that: (i) the maximum (resp. the minimum) degree of $\{p_1,\ldots,p_n\}$ and of $\{q_1,\ldots,q_n\}$ (resp. $\{\tilde{q}_1,\ldots,\tilde{q}_n\}$) is the same; (ii) the maximum degrees of the $q_i(t)$ (resp. the minimum degrees of the $\tilde{q}_i(t)$) are all distinct, i.e. for $i\neq j$ the maximum (resp. minimum) degree of $q_i(t),q_j(t)$ (resp. $\tilde{q}_i(t),\tilde{q}_j(t)$) is different; (iii) $W(p_1,\ldots,p_n)=W(q_1,\ldots,q_n)$ (resp. $W(p_1,\ldots,p_n)=W(\tilde{q}_1,\ldots,\tilde{q}_n)$); (iv) $\langle p_1(t),\ldots,p_n(t)\rangle=\langle q_1(t),\ldots,q_n(t)\rangle$ (resp. $\langle p_1(t),\ldots,p_n(t)\rangle=\langle \tilde{q}_1(t),\ldots,\tilde{q}_n(t)\rangle$). 
\end{proposition}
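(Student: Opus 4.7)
The proof plan is to adapt, essentially verbatim, the Gauss-like column-elimination of Proposition \ref{prop1}. The only new ingredient is the additional bookkeeping forced by Laurent polynomials having two ``degree'' parameters (the minimum and the maximum exponent) rather than one, so the argument must be run twice: once on maximum degrees to produce the $q_i$'s, and once on minimum degrees to produce the $\tilde{q}_i$'s.

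For the existence of $q_1(t),\ldots,q_n(t)$ I would permute the indices so that $p_1$ attains the set maximum degree ${\bf n}$, and then, for every other $p_j$ whose maximum degree equals ${\bf n}$, apply the substitution
\[
\tilde p_j(t) := p_j(t) - \frac{b_{j,{\bf n}}}{b_{1,{\bf n}}}\, p_1(t),
\]
where $b_{i,{\bf n}}$ denotes the coefficient of $t^{{\bf n}}$ in $p_i(t)$. Since differentiation is ${\Bbb K}$-linear on ${\Bbb K}[t,t^{-1}]$, this is an elementary column operation on the Wronskian, so the Wronskian, the span ${\mathcal V}$, and the linear independence of the set are all preserved. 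After the sweep only one polynomial has maximum degree ${\bf n}$; iterating with the next smaller maximum degree, and so on, terminates because every element of ${\mathcal V}$ has minimum (and hence maximum) degree at least ${\bf m}$, so the integer maximum degrees are bounded below and each sweep strictly decreases one of them. When the process halts, all individual maximum degrees are distinct, yielding the $q_i(t)$'s. The construction of $\tilde q_1(t),\ldots,\tilde q_n(t)$ is entirely symmetric: use the coefficient of the lowest-degree monomial instead of the highest, and choose as pivot a polynomial attaining the set minimum degree ${\bf m}$; termination is then ensured by the dual bound that minimum degrees in ${\mathcal V}$ are bounded above by ${\bf n}$.

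The main (and rather mild) subtlety relative to Proposition \ref{prop1} is in excluding the degenerate possibility that the sweep halts with two polynomials sharing a common extremal degree and no way to continue. I would dispatch this directly from linear independence: every substitution is an elementary operation of determinant one, so linear independence is preserved throughout, and hence no intermediate polynomial is zero and no two are proportional. Consequently, whenever two polynomials share a common maximum (or minimum) degree the substitution is applicable and produces a nonzero polynomial of strictly smaller maximum (resp. larger minimum) degree, so the procedure cannot halt in that configuration. This establishes (ii); conditions (i), (iii), and (iv) are then immediate from the construction, since each substitution preserves the Wronskian and the span, and the set maximum (resp. minimum) degree is preserved by keeping the pivot untouched throughout the first (resp. second) construction.
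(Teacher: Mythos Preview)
Your proposal is correct and takes essentially the same approach as the paper, which does not write out a separate proof but simply remarks that the Gauss-like column elimination of Proposition~\ref{prop1} carries over unchanged, applied once to maximum degrees to obtain the $q_i$ and once to minimum degrees to obtain the $\tilde q_i$. Your added care about termination (via the bounds ${\bf m}$ and ${\bf n}$ on degrees of nonzero elements of $\mathcal V$) and about nonvanishing of intermediate polynomials is exactly the bookkeeping the paper alludes to when it says one must ``take care of both the minimum and the maximum degrees.''
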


\begin{example}\label{exlau}
To illustrate Proposition \ref{prop2}, consider $p_1(t)=t^2-1+t^{-1}$, $p_2(t)=t^2+t-t^{-2}$, $p_3(t)=t+t^{-2}$. Then
\[
W(p_1,p_2,p_3)=\left\vert \begin{array}{ccc}
t^2-1+t^{-1} & t^2+t-t^{-2} & t+t^{-2}\\
2t-t^{-2} & 2t+1+2t^{-2} & 1-2t^{-3}\\
2+2t^{-3} & 2-6t^{-4} & 6t^{-4}
\end{array}
\right \vert=2 - 3t^{-1} - 12t^{-2} - 8t^{-3} + 6t^{-5}.
\]
Notice that the maximum degrees of $p_1(t),p_2(t),p_3(t)$ are $2,2,1$. Denoting the $j$-th column of $W(p_1,p_2,p_3)$ by ${\bf c}_j$, we perform the operations ${\bf c}_2:={\bf c}_2-{\bf c}_1$, ${\bf c}_3:={\bf c}_3-{\bf c}_2$, and we get that 
\[
W(p_1,p_2,p_3)=W(t^2-1+t^{-1},t+1-t^{-1}-t^{-2},-1+t^{-1}+2t^{-2}),
\]
i.e. $W(p_1,p_2,p_3)=W(q_1,q_2,q_3)$ with $q_1(t)=p_1(t)=t^2-1+t^{-1}$, $q_2(t)=t+1-t^{-1}-t^{-2}$, $q_3(t)=-1+t^{-1}+2t^{-2}$; the maximum degrees of the $q_i(t)$ are now $2,1,0$, so all of them are distinct. Additionally, notice that the minimum degrees of $p_1(t),p_2(t),p_3(t)$ are $-1,-2,-2$. If we perform on $W(p_1,p_2,p_3)$ the operation ${\bf c}_2:={\bf c}_2+{\bf c}_3$, we get that (now we order the terms of each $p_i(t)$ by increasing degree)
\[
W(p_1,p_2,p_3)=W(t^{-1}-1+t^2,-t^{-2}+t+t^2,t^{-2}+t)=W(t^{-1}-1+t^2,2t+t^2,t^{-2}+t),
\]
i.e. $W(p_1,p_2,p_3)=W(\tilde{q}_1,\tilde{q}_2,\tilde{q}_3)$ with $\tilde{q}_1(t)=p_1(t)=t^{-1}-1+t^2$, $\tilde{q}_2(t)=2t+t^2$, $\tilde{q}_3(t)=p_3(t)=t^{-2}+t$, whose minimum degrees are $-1,1,-2$; notice that all of them are distinct. 
\end{example}

The Wronskian $W(p_1,\ldots,p_n)$ of a finite set of Laurent polynomials is also a Laurent polynomial, so it makes sense to speak about the minimum and the maximum degree of $W(p_1,\ldots,p_n)$. Then we have the following result, which is analogous to Proposition \ref{propdeg} for the case of Laurent polynomials. 

\begin{proposition}\label{propdeg2}
Let $q_1(t),\ldots,q_n(t)\in {\Bbb K}[t,t^{-1}]$, and for $i=1,\ldots,n$ let $d_i$ be the maximum degree (resp. $e_i$ the minimum degree) of $q_i(t)$. Assume that all the $d_i$ (resp. the $e_i$) are distinct. Then the maximum degree $\tau_{max}$ (resp. the minimum degree $\tau_{min}$) of $W(q_1,\ldots,q_n)$ is 
\begin{equation}\label{deg2}
\tau_{max}=d_1+\cdots+d_n-{n\choose 2},\mbox{ }\tau_{min}=e_1+\cdots+e_n-{n\choose 2}.
\end{equation}
\end{proposition}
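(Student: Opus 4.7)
The plan is to mirror the proof of Proposition~\ref{propdeg}, applying essentially the same argument to both the top and the bottom of the Laurent expansions. First, writing each $q_i(t)=\sum_{j=e_i}^{d_i}b_{i,j}t^j$ with $b_{i,d_i},b_{i,e_i}\neq 0$, I would expand the Wronskian using multilinearity of the determinant in each column:
\[
W(q_1,\ldots,q_n)=\sum b_{1,j_1}\cdots b_{n,j_n}\,W(t^{j_1},\ldots,t^{j_n}),
\]
the sum taken over all tuples $(j_1,\ldots,j_n)$ with $e_i\leq j_i\leq d_i$. By Lemma~1 of \cite{BO}, each pure-monomial Wronskian equals $V(j_1,\ldots,j_n)\,t^{j_1+\cdots+j_n-{n\choose 2}}$, where $V$ denotes the Vandermonde determinant; this identity applies equally well when some of the $j_i$ are negative, so the extension to the Laurent setting is free.

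The next step is the combinatorial observation that, subject to the constraint $j_i\leq d_i$ for all $i$, the sum $j_1+\cdots+j_n$ attains its maximum uniquely at the tuple $(d_1,\ldots,d_n)$; symmetrically, subject to $j_i\geq e_i$, it attains its minimum uniquely at $(e_1,\ldots,e_n)$. Consequently, in the above expansion the single summand of maximum (respectively, minimum) $t$-degree is
\[
V(d_1,\ldots,d_n)\,b_{1,d_1}\cdots b_{n,d_n}\,t^{\tau_{max}}\quad(\text{resp.}\;V(e_1,\ldots,e_n)\,b_{1,e_1}\cdots b_{n,e_n}\,t^{\tau_{min}}),
\]
and, being the only summand at that extremal degree, it cannot be cancelled by any other contribution.

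To finish, I would invoke the hypothesis that the $d_i$ (resp.\ the $e_i$) are pairwise distinct; this ensures $V(d_1,\ldots,d_n)\neq 0$ (resp.\ $V(e_1,\ldots,e_n)\neq 0$), and combined with the nonvanishing of the leading (resp.\ trailing) coefficients $b_{i,d_i}$ (resp.\ $b_{i,e_i}$), one concludes that the maximum degree of $W(q_1,\ldots,q_n)$ is exactly $\tau_{max}$ and its minimum degree is exactly $\tau_{min}$. I do not foresee a genuine obstacle: the argument is structurally identical to Proposition~\ref{propdeg}, and the only point worth stressing is the uniqueness of the extremizing tuple, which is immediate from the coordinatewise inequalities $j_i\leq d_i$ and $j_i\geq e_i$ together with the strictness required to change even one coordinate.
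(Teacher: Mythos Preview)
Your proof is correct and follows essentially the same approach as the paper: expand the Wronskian by multilinearity, identify the term of maximum (resp.\ minimum) degree as the Wronskian of the leading (resp.\ trailing) monomials of the $q_i$, and conclude via the Vandermonde formula from Lemma~1 of \cite{BO} together with the distinctness hypothesis. Your write-up is simply a more explicit version of what the paper sketches in two lines by pointing back to the proof of Proposition~\ref{propdeg}.
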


\begin{proof}
By writing the Wronskian $W(q_1,\ldots,q_n)$ as a sum of determinants, the term of maximum (resp. minimum) degree corresponds to the Wronskian of the terms of maximum (resp. minimum) degree of the $q_i(t)$. Then we can proceed as in the proof of Proposition \ref{propdeg}.
\end{proof}

In order to characterize the Laurent polynomials with nonzero constant Wronskian, we need first the following technical lemma. 

\begin{lemma}\label{now2}
Let $p_1(t),\ldots,p_n(t)\in {\Bbb K}[t,t^{-1}]$, linearly independent. Then we can find $q_1(t),\ldots,q_n(t)$ and $\tilde{q}_1(t),\ldots,\tilde{q}_n(t)$, with the properties in Proposition \ref{prop2}, such that the maximum degrees $d_i$ of the $q_i(t)$ and the minimum degrees $e_i$ of the $\tilde{q}_i(t)$ satisfy that $d_i\geq e_i$. 
\end{lemma}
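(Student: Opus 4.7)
The plan is to take the sequences $q_1,\ldots,q_n$ and $\tilde q_1,\ldots,\tilde q_n$ supplied by Proposition \ref{prop2}, permute them so that the maximum degrees $d_i$ and the minimum degrees $e_i$ are both strictly decreasing, and then establish $d_i\geq e_i$ by a dimension count on $\mathcal{V}=\langle p_1,\ldots,p_n\rangle$.

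First I would apply Proposition \ref{prop2} to obtain $q_i$ with pairwise distinct maximum degrees $d_i$, and $\tilde q_i$ with pairwise distinct minimum degrees $e_i$, then reindex so that $d_1>d_2>\cdots>d_n$ and $e_1>e_2>\cdots>e_n$. Properties (i), (ii) and (iv) of Proposition \ref{prop2} are unaffected by a permutation; property (iii) changes at most by a sign, and if that sign flips I negate one $q_i$ (resp.\ one $\tilde q_i$), which preserves its degrees and the span while flipping the Wronskian once more and thus restoring $W(p_1,\ldots,p_n)=W(q_1,\ldots,q_n)$.

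Next I would introduce the two degree filtrations of $\mathcal{V}$,
\[
\mathcal{V}^+_k=\{f\in\mathcal{V}\mid f=0\text{ or every monomial of }f\text{ has degree }\leq k\},
\]
\[
\mathcal{V}^-_k=\{f\in\mathcal{V}\mid f=0\text{ or every monomial of }f\text{ has degree }\geq k\}.
\]
The key observation is that, since $d_1>\cdots>d_n$, the top monomial of any nonzero combination $\sum c_j q_j$ sits in degree $d_{j_0}$, where $j_0=\min\{j\mid c_j\neq 0\}$: the leading term of $q_{j_0}$ cannot be cancelled because every other summand has strictly smaller maximum degree. Consequently $\mathcal{V}^+_{d_i}$ has basis $q_i,q_{i+1},\ldots,q_n$, whence $\dim\mathcal{V}^+_{d_i}=n-i+1$; the symmetric argument applied to the $\tilde q_j$'s at the bottom yields $\dim\mathcal{V}^-_{e_i}=i$.

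Finally, the inequality $\dim\mathcal{V}^+_{d_i}+\dim\mathcal{V}^-_{e_i}=(n-i+1)+i=n+1>n=\dim\mathcal{V}$ forces, by Grassmann's formula, the existence of a nonzero $f\in\mathcal{V}^+_{d_i}\cap\mathcal{V}^-_{e_i}$; since $f$ is a nonzero Laurent polynomial its minimum degree is at most its maximum degree, so $e_i\leq d_i$ as required. The main obstacle is the leading-/trailing-term bookkeeping that identifies the dimensions of the filtration pieces from the strictly ordered bases; once that is settled, the dimension count closes the lemma immediately.
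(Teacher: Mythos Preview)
Your argument is correct, and it takes a genuinely different route from the paper.

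The paper's proof is constructive: it first produces the $q_i$ with pairwise distinct maximum degrees $d_i$, and then applies the Gauss-like elimination for minimum degrees \emph{to the $q_i$ themselves} rather than to the original $p_i$. The (implicit) point is that, if one always uses as pivot the $q_j$ with the smallest maximum degree among those sharing the current lowest minimum degree, the operation $\tilde q_i \leftarrow q_i - c\,q_j$ never raises the maximum degree of the $i$-th slot, so the resulting $\tilde q_i$ inherits $e_i\le d_i$ directly from the trivial inequality ``minimum degree $\le$ maximum degree'' for a nonzero Laurent polynomial. This is economical but relies on a pivoting convention that the paper leaves to the reader.

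Your approach instead decouples the two sequences entirely: you take \emph{any} $q_i$ and \emph{any} $\tilde q_i$ from Proposition~\ref{prop2}, sort both so the relevant degrees are strictly decreasing, and then run a dimension count on the two filtrations of $\mathcal V$ by maximum and minimum degree. The Grassmann inequality $(n-i+1)+i>n$ forces $\mathcal V^+_{d_i}\cap\mathcal V^-_{e_i}\neq\{0\}$, and any nonzero witness gives $e_i\le d_i$. This is non-constructive but more robust: it does not depend on how the Gaussian elimination was carried out, and it makes transparent that the inequality is really a statement about the two degree filtrations of the subspace $\mathcal V$ rather than about a particular reduction procedure. The sign repair after permuting (negating one $q_i$ or one $\tilde q_i$) is handled correctly and does not disturb any of the properties (i)--(iv).
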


\begin{proof}
Let $q_1(t),\ldots,q_n(t)$ be computed as in Proposition \ref{prop2}, and let $d_i$ be the maximum degree of $q_i(t)$. Then we can compute the $\tilde{q}_i(t)$ starting directly from the $q_i(t)$ (and not from the $p_i(t)$), and the result follows. 
\end{proof}

In the case of nonzero constant Wronskian, we can push the result a bit further. 

\begin{proposition}\label{essential}
Let $p_1(t),\ldots,p_n(t)\in {\Bbb K}[t,t^{-1}]$, linearly independent, and assume that $W(p_1,\ldots,p_n)$ is a nonzero constant. Let $q_1(t),\ldots,q_n(t)$ and $\tilde{q}_1(t),\ldots,\tilde{q}_n(t)$ be as in Lemma \ref{now2}, and for $i=1,\ldots,n$, let $d_i$ (resp. $e_i$) be the maximum (resp. the minimum) degree of $q_i(t)$ (resp. $\tilde{q}_i(t)$). Then we have that:
\begin{itemize}
\item [(1)] $d_1+\cdots+d_n=e_1+\cdots+e_n=\displaystyle{n\choose 2}$.
\item [(2)] $d_i=e_i$ for $i=1,\ldots,n$.
\end{itemize}
\end{proposition}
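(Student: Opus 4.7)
The plan is to combine Proposition \ref{propdeg2} (which pins down the maximum and minimum degrees of a Wronskian in terms of the summands' max/min degrees when those are distinct) with Lemma \ref{now2} (which gives the pointwise inequality $d_i \geq e_i$). The key observation that makes everything work is that a nonzero constant, viewed as a Laurent polynomial, has maximum degree and minimum degree \emph{both} equal to $0$.

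For part (1), I would first invoke Proposition \ref{prop2} to conclude that
\[
W(p_1,\ldots,p_n) = W(q_1,\ldots,q_n) = W(\tilde{q}_1,\ldots,\tilde{q}_n),
\]
and this common value is a nonzero constant by hypothesis. Since the $d_i$ are all distinct (they are the max degrees of the $q_i$) and the $e_i$ are all distinct (they are the min degrees of the $\tilde{q}_i$), Proposition \ref{propdeg2} applies to both Wronskians, giving maximum degree $d_1+\cdots+d_n - \binom{n}{2}$ and minimum degree $e_1+\cdots+e_n - \binom{n}{2}$ respectively. Setting each equal to $0$ yields the desired equality $d_1+\cdots+d_n = e_1+\cdots+e_n = \binom{n}{2}$.

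For part (2), I would use Lemma \ref{now2}, which gives $d_i \geq e_i$ for each $i$. Combined with the equality of sums $\sum_i d_i = \sum_i e_i$ proved in part (1), this forces
\[
\sum_{i=1}^n (d_i - e_i) = 0,
\]
with each summand nonnegative. Hence $d_i - e_i = 0$ for every $i$, i.e.\ $d_i = e_i$.

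There is no real obstacle here: the proof is essentially bookkeeping, and the substantive content has been placed in the preceding Proposition \ref{propdeg2} and Lemma \ref{now2}. The only subtle point worth underlining in the write-up is that the pointwise inequality $d_i\geq e_i$ from Lemma \ref{now2} requires the indexing of the $q_i$ and $\tilde{q}_i$ to be compatible (which is guaranteed by the construction described there, where the $\tilde{q}_i$ are produced by applying the minimum-degree Gauss elimination to the already-normalized $q_i$, preserving the labeling).
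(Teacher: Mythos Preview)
Your proposal is correct and follows essentially the same approach as the paper: invoke Proposition~\ref{prop2} so that all three Wronskians coincide with a nonzero constant, apply Proposition~\ref{propdeg2} to force $\tau_{\max}=\tau_{\min}=0$ for part~(1), and then combine the resulting equality of sums with the pointwise inequality $d_i\geq e_i$ from Lemma~\ref{now2} to obtain part~(2). Your added remark about the compatible indexing of the $q_i$ and $\tilde{q}_i$ is a useful clarification, but otherwise the arguments are the same.
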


\begin{proof}
By Proposition \ref{prop2}, $W(p_1,\ldots,p_n)=W(q_1,\ldots,q_n)=W(\tilde{q}_1,\ldots,\tilde{q}_n)$, so $W(q_1,\ldots,q_n)$ and $W(\tilde{q}_1,\ldots,\tilde{q}_n)$ are nonzero constants. Therefore, by Proposition \ref{propdeg2} we have that $\tau_{max}=\tau_{min}=0$, and statement (1) follows. From statement (1) we get that $(d_1-e_1)+\cdots +(d_n-e_n)=0$. Since $d_i-e_i\geq 0$ by Lemma \ref{now2}, we deduce that $d_i-e_i=0$ for all $i=1,\ldots,n$, and (2) follows. 
\end{proof}

In turn, Proposition \ref{essential} leads to the following result. 

\begin{proposition}\label{essential2}
Let $p_1(t),\ldots,p_n(t)\in {\Bbb K}[t,t^{-1}]$, linearly independent, and assume that $W(p_1,\ldots,p_n)$ is a nonzero constant. For $i=1,\ldots,n$, let $q_i(t),\tilde{q}_i(t)$ be as in Lemma \ref{now2}, and let $d_i$ be the maximum (resp. minimum) degree of $q_i(t)$ (resp. $\tilde{q}_i(t))$. Then 
\[
\{t^{d_1},\ldots,t^{d_n}\}
\]
is a basis for $\langle p_1(t),\ldots,p_n(t)\rangle$.
\end{proposition}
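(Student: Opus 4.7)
The plan is to argue that $V := \langle p_1(t), \ldots, p_n(t)\rangle$ coincides with $\langle t^{d_1}, \ldots, t^{d_n}\rangle$. Since the exponents $d_i$ are pairwise distinct, the monomials $t^{d_1}, \ldots, t^{d_n}$ are linearly independent, and since $\dim V = n$, it is enough to prove that each $t^{d_i}$ lies in $V$; then these $n$ linearly independent elements automatically form a basis.

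The key ingredient I would establish first is: for every nonzero $v \in V$, both the maximum and minimum degrees of $v$ belong to $\{d_1, \ldots, d_n\}$. To prove this for the maximum degree, I would reorder so that $d_1 > d_2 > \cdots > d_n$ and expand $v = \sum_{j} \lambda_j q_j$. Letting $k$ be the smallest index with $\lambda_k \neq 0$, the sum only involves $q_j$ with $j \geq k$, all of which have maximum degree $d_j \leq d_k$; moreover, among these, only $q_k$ itself can contribute a nonzero coefficient at $t^{d_k}$ (for $j > k$ the strict inequality $d_j < d_k$ forces the $t^{d_k}$-coefficient of $q_j$ to vanish). Thus the coefficient of $t^{d_k}$ in $v$ equals $\lambda_k$ times the leading coefficient of $q_k$, which is nonzero, and the maximum degree of $v$ equals $d_k$. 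The argument for the minimum degree is symmetric, using the basis $\{\tilde{q}_i\}$ together with Proposition \ref{essential}, which guarantees that the set of minimum degrees of the $\tilde{q}_i$ is exactly $\{d_1, \ldots, d_n\}$.

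To extract the monomials $t^{d_i}$ from $V$, consider the linear map $\pi : V \to {\Bbb K}^n$ that reads off the coefficients at $t^{d_1}, \ldots, t^{d_n}$. In the basis $\{q_i\}$, the matrix of $\pi$ is triangular with nonzero diagonal entries (the leading coefficients of the $q_i$), because $t^{d_k}$ has exponent strictly greater than the maximum degree of $q_i$ whenever $d_k > d_i$. Hence $\pi$ is a linear isomorphism, and for each $i$ there is a unique $v_i \in V$ whose vector of coefficients at $t^{d_1}, \ldots, t^{d_n}$ has a $1$ in position $i$ and zeros elsewhere. Applying the key ingredient to $v_i$, both its maximum and minimum degrees must lie in $\{d_1, \ldots, d_n\}$; but the only $d_j$ at which $v_i$ carries a nonzero coefficient is $d_j = d_i$, so the support of $v_i$ collapses to $\{d_i\}$, giving $v_i = t^{d_i} \in V$ and completing the proof.

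The main subtlety is that neither the $q_i$ nor the $\tilde{q}_i$ alone suffices: Gaussian elimination on $q_1, \ldots, q_n$ can zero out the $t^{d_j}$-coefficients of an element of $V$, but could leave behind unwanted terms at exponents outside $\{d_1, \ldots, d_n\}$. What makes the argument work is combining maximum-degree control from the $q_i$'s with minimum-degree control from the $\tilde{q}_i$'s, whose precise alignment is ensured by Proposition \ref{essential}; together they squeeze the support of $v_i$ down to the single exponent $d_i$.
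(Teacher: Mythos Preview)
Your proof is correct and shares the paper's core mechanism: Proposition~\ref{essential} aligns the maximum degrees of the $q_i$ with the minimum degrees of the $\tilde q_i$, and this two-sided control forces the subspace to be spanned by the monomials $t^{d_i}$. The organization differs. The paper argues iteratively on the $\tilde q_i$: since $\tilde q_1$ has minimum degree $d_1$ while every element of $V=\langle q_1,\ldots,q_n\rangle$ has maximum degree at most $d_1$, one gets $\tilde q_1=\alpha_1 t^{d_1}$; then, with $t^{d_1}$ in hand, one repeats for $\tilde q_2$, and so on, producing a triangular expression of the $\tilde q_i$ in the monomials $t^{d_j}$. You instead isolate a reusable lemma (every nonzero $v\in V$ has both its maximum and its minimum degree in $\{d_1,\ldots,d_n\}$) and then extract each $t^{d_i}$ in one stroke via the coefficient map $\pi$, avoiding the induction and making the role of the two bases symmetric and explicit. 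Both routes are short; yours is arguably cleaner, and your key ingredient makes fully rigorous the step the paper abbreviates as ``arguing as before.''
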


\begin{proof}
Because of Proposition \ref{prop2} we know that $\langle p_1(t),\ldots,p_n(t)\rangle=\langle q_1(t),\ldots,q_n(t)\rangle$ and $\langle p_1(t),\ldots,p_n(t)\rangle=\langle \tilde{q}_1(t),\ldots,\tilde{q}_n(t)\rangle$, so $\langle q_1(t),\ldots,q_n(t)\rangle=\langle \tilde{q}_1(t),\ldots,\tilde{q}_n(t)\rangle$. Assume without loss of generality that $d_1>d_2>\cdots>d_n$, so $d_1$ is the maximum of the $d_i$ (which are all of them distinct). Then $q_1(t)$ consists of monomials of degree $\leq d_1$. Additionally, since by Proposition \ref{essential} $d_1$ is also the minimum degree of $\tilde{q}_1(t)$, we have that $\tilde{q}_1(t)$ consists of monomials of degree $\geq d_1$. Now if $\tilde{q}_1(t)$ contains some monomial $t^{d'_1}$ of degree $d'_1>d_1$, we get that $t^{d'_1}$ must be part of a monomial basis for $\langle \tilde{q}_1(t),\ldots,\tilde{q}_n(t)\rangle$. Since $\langle q_1(t),\ldots,q_n(t)\rangle=\langle \tilde{q}_1(t),\ldots,\tilde{q}_n(t)\rangle$, we deduce that $t^{d'_1}$ is also part of a monomial basis for $\langle q_1(t),\ldots,q_n(t)\rangle$, which does not make sense because $d_1$ is the maximum degree appearing in the $q_i(t)$, so no term in $t^{d'_1}$, $d'_1>d_1$, is present in any $q_i(t)$. Thus, $\tilde{q}_1(t)=\alpha_1 t^{d_1}$, $\alpha_1\neq 0$. 

Next, consider $\tilde{q}_2(t)$. We know that the minimum degree of $\tilde{q}_2(t)$ is $d_2$, i.e. $\tilde{q}_2(t)$ consists of monomials of degree $\geq d_2$. However, arguing as before, we deduce that $\tilde{q}_2(t)$ can only have monomials in $t^{d_2}$ and $t^{d_1}$. Proceeding in the same way for $\tilde{q}_i(t)$ with $i\geq 3$, we deduce that $\{t^{d_1},\ldots,t^{d_n}\}$ is a basis for $\langle \tilde{q}_1(t),\ldots,\tilde{q}_n(t)\rangle$. Since $\langle p_1(t),\ldots,p_n(t)\rangle=\langle \tilde{q}_1(t),\ldots,\tilde{q}_n(t)\rangle$, the result follows. 
\end{proof}

Finally we can characterize what we want.

\begin{theorem}\label{th-main2}
Let $p_1(t),\ldots,p_n(t)\in {\Bbb K}[t,t^{-1}]$, linearly independent. Then $W(p_1,\ldots,p_n)$ is a nonzero constant if and only if we have that 
\begin{equation}\label{eq-th2}
\begin{bmatrix}
p_1(t) \\
p_2(t)\\
\vdots \\
p_n(t)
\end{bmatrix}=A\cdot  
\begin{bmatrix}
t^{r_1} \\
t^{r_2}\\
\vdots \\
t^{r_n}
\end{bmatrix}
\end{equation}
where $A$ is a nonsingular matrix, and $r_1,\ldots,r_n\in {\Bbb Z}$, all distinct, and satisfy that 
\begin{equation}\label{lacondi}
r_1+\cdots+r_n=\displaystyle{n\choose 2}.
\end{equation}
\end{theorem}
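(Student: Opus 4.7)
The plan is to exploit the infrastructure already built up in Section \ref{laurent}. The forward implication is essentially a direct translation of Propositions \ref{essential} and \ref{essential2}; the reverse implication reduces, via multilinearity of the Wronskian, to showing that $W(t^{r_1},\ldots,t^{r_n})$ is a nonzero constant under the stated hypotheses.

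For $(\Rightarrow)$, I would assume $W(p_1,\ldots,p_n)$ is a nonzero constant and apply Proposition \ref{essential2} to the linearly independent $p_i$'s. This produces distinct integers $d_1,\ldots,d_n$ such that $\{t^{d_1},\ldots,t^{d_n}\}$ is a basis of $\langle p_1,\ldots,p_n\rangle$. Expressing each $p_i$ in this basis yields exactly the matrix equation \eqref{eq-th2} with $r_i:=d_i$, and the matrix $A$ is nonsingular because two bases of the same vector space are related by an invertible change-of-basis matrix. The sum condition \eqref{lacondi} is then statement (1) of Proposition \ref{essential}.

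For $(\Leftarrow)$, I would assume \eqref{eq-th2} and \eqref{lacondi}. By multilinearity of the Wronskian (Lemma 2 of \cite{BO}),
\[
W(p_1,\ldots,p_n) = |A| \cdot W(t^{r_1},\ldots,t^{r_n}).
\]
Since the $r_i$ are all distinct, Proposition \ref{propdeg2} applies with $d_i=e_i=r_i$ and gives $\tau_{max}=\tau_{min}=r_1+\cdots+r_n-\binom{n}{2}=0$; hence $W(t^{r_1},\ldots,t^{r_n})$ is a constant. To show this constant is nonzero, I would factor $t^{r_j}$ from the $j$-th column and $t^{-k}$ from the $(k+1)$-th row of the determinant, reducing $W(t^{r_1},\ldots,t^{r_n})$ to the determinant of the falling factorials $r_j(r_j-1)\cdots(r_j-k+1)$, which by elementary column operations equals the Vandermonde $\prod_{1\leq i<j\leq n}(r_j-r_i)$, nonzero because the $r_i$ are pairwise distinct. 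Since $|A|\neq 0$, the product is a nonzero constant.

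The main (and rather mild) obstacle is verifying that $W(t^{r_1},\ldots,t^{r_n})$ is nonzero rather than merely constant; the Vandermonde computation above, modelled on Lemma 1 of \cite{BO} and already used inside the proof of Proposition \ref{propdeg}, handles this cleanly. All the heavy lifting—finding a monomial basis of $\langle p_1,\ldots,p_n\rangle$ and matching up the extremal degrees of the Wronskian via the bounds of Lemma \ref{now2}—has been done in Propositions \ref{essential} and \ref{essential2}, so the theorem itself is essentially a repackaging of those results.
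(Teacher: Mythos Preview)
Your proposal is correct and follows essentially the same route as the paper: the forward direction is exactly the intended appeal to Propositions~\ref{essential} and~\ref{essential2}, and the reverse direction is the same reduction via Lemma~2 of \cite{BO} to $W(t^{r_1},\ldots,t^{r_n})$ followed by a Vandermonde computation. If anything, your treatment of $(\Leftarrow)$ is tidier---invoking Proposition~\ref{propdeg2} to certify constancy and then factoring out the monomial contributions to isolate the Vandermonde---whereas the paper's displayed product $\prod_i r_i(r_i-1)\cdots(r_i-n+1)$ is a slip (the correct value is simply $V(r_1,\ldots,r_n)\,t^{\sum r_i-\binom{n}{2}}$, which your argument recovers).
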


\begin{proof}
$(\Rightarrow)$ follows from Proposition \ref{essential2} and Proposition \ref{essential}. For $(\Leftarrow)$, notice that $W(t^{r_1},\ldots,t^{r_n})$ is equal to 
\[
\Pi_{i=1}^n r_i(r_i-1)\cdots (r_i-n+1)
\]
multiplied by a Vandermonde determinant, which is nonzero because all the $r_i$ are distinct. Using Lemma 2 in \cite{BO} as in the proof of the implication $(\Leftarrow)$ of Theorem \ref{th-main}, we are done. 
\end{proof}

Notice that compared to the case in Section \ref{back}, now we have much more freedom. For $n=2$, Theorem \ref{th-main2} implies that $r_1+r_2=1$, with $r_1\neq r_2$, $r_1,r_2\geq {\Bbb Z}$, so we have infinitely many pairs fulfilling these conditions. For $n\geq 3$ we have even more freedom. 

\begin{example}
The Laurent polynomials $p_1(t)=t^6$, $p_2(t)=1/t$, $p_3(t)=1/t^2$ of Example \ref{exlau} have degrees $6,-1,-2$, so 
\[
6+(-1)+(-2)=3=\displaystyle{3\choose 2},
\]
and Eq. \eqref{lacondi} is fulfilled. 
\end{example}

\para
\begin{remark}
Let $\bfy(t)=(p_1(t),\ldots,p_n(t))$ be a parametrization of a curve ${\mathcal C}\subset {\Bbb K}^n$, where the $p_i(t)$ are Laurent polynomials. The parametrizations $(t^{r_1},\ldots,t^{r_n})$ with $r_i\in {\Bbb Z}$ are examples of curves which are \emph{toric varieties} \cite{CLS} in dimension $n$. Thus, Eq. \eqref{lacondi} is the condition for a toric curve in dimension $n$, with distinct $r_i$, to have a nonzero constant Wronskian. In particular, Theorem \ref{th-main2} says that the parametric curves, whose components are Laurent polynomials, with nonzero constant Wronskian are exactly the affine images of such curves. Furthermore, notice that to achieve Eq. \eqref{lacondi} we need to have both positive and negative $r_i$, so these curves are not bounded, i.e. they have a branch at infinity. Additionally, unlike Section \ref{back}, the degree of these curves is not bounded, since Eq. \eqref{lacondi} can be achieved with one of the $r_i$ arbitrarily large. Also unlike Section \ref{back}, these curves are not necessarily contained in a hyperplane. 
\end{remark}

\begin{remark}
The results in this section are also valid for polynomials $\tilde{p}(t)=(p\circ \phi)(t)$ where $\phi(t)=t+a$ and $p(t)$ is a Laurent polynomial, because $W(\tilde{p}_1,\ldots,\tilde{p}_n)=W(p_1,\ldots,p_n)$. In other words, the results are valid whenever the set $\{p_1(t),\ldots,p_n(t)\}$ consists of rational functions with only one pole. 
\end{remark}

\section{Rational curves with more than one pole}\label{sec-impossible}

Finally we consider a set of rational functions $p_1(t),\ldots,p_n(t)\in {\Bbb K}(t)$, linearly independent, with at least two different poles (since otherwise we are in the cases treated in the previous sections). We want to investigate whether it is possible that the Wronskian $W(p_1,\ldots,p_n)$ is a nonzero constant. Since we can always apply a linear reparametrization $t\to t+a$ taking one of these poles to the origin (which does not change the value of the Wronskian), we will assume that the poles of the $p_i(t)$ are $0,\beta_1,\ldots,\beta_r$ where $r\geq 1$, the $\beta_j$ are distinct, and $\beta_j\neq 0$ for all $j=1,\ldots,r$. Therefore, every $p_i(t)$ looks like
\begin{equation}\label{theexp}
\begin{array}{rcl}
p_i(t)&=&a_{-k}t^{-k}+\cdots +a_{-1}t^{-1}+a_0+a_1 t+\cdots +a_s t^s+\\
&&+b_{-\ell_1}(t-\beta_1)^{-\ell_1}+\cdots +b_{-1}(t-\beta_1)^{-1}+\\
&&+c_{-\ell_r}(t-\beta_r)^{-\ell_k}+\cdots +c_{-1}(t-\beta_r)^{-1},
\end{array}
\end{equation}
where of course the coefficients and exponents vary with $i$ (although we do not spell this out to avoid a cumbersome notation). Furthermore, among the $p_i(t)$ we can find polynomials with some nonzero $a_{-j}$, where $j\in {\Bbb N}$, and analogously for $b_{-m},\ldots,c_{-n}$, with $m,\ldots,n\in {\Bbb N}$. Denoting by $K,L_1$ the maximum orders of the poles $t=0$, $t=\beta_1$ among the $p_i(t)$, we can always assume that nonzero terms in $t^{-K}$ and $(t-\beta_1)^{-L_1}$ are present in all the $p_i$: indeed, writing $\bfy(t)=(p_1(t),\ldots,p_n(t))$, if $A$ is an orthogonal $n\times n$ matrix, we know that $W(A\bfy(t))=|A|W(\bfy(t))$, so if $W(\bfy(t))$ is a nonzero constant, so is $W(A\bfy(t))$; but by appropriately choosing $A$, we can be sure that all the components of $A\bfy(t)$ have nonzero terms in $t^{-K}$ and $(t-\beta_1)^{-L_1}$. Additionally, proceeding with Gauss-like elimination as we did in Proposition \ref{prop1} and Proposition \ref{prop2}, we can equivalently consider $W(q_1,\ldots,q_n)=W(p_1,\ldots,p_n)$ (possibly up to a change in the sign) where $q_1$ contains a nonzero term in $t^{-K}$, $q_2$ does not contain any term in $t^{-K}$, but contains a nonzero term in $(t-\beta_1)^L_1$, and $q_i$, for $i\geq 3$, does not contain nonzero terms in $t^{-K}$, $(t-\beta_1)^L_1$. 

\para
From now on we will focus on the case $n=2$. Here we have the following impossibility result. 

\begin{theorem}\label{notpossibledim2}
Let $p_1(t),p_2(t)\in {\Bbb K}(t)$, linearly independent, where the $p_i(t)$ have at least two different poles. Then the Wronskian $W(p_1,p_2)$ cannot be a nonzero constant. 
\end{theorem}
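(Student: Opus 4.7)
The plan is to exploit that $W(q_1,q_2)$ being a nonzero constant forces $W'=0$, so $q_1 q_2'' = q_2 q_1''$ and hence $r := q_1''/q_1 = q_2''/q_2$ is a single rational function: both $q_i$ are rational solutions of the linear ODE $y''=ry$. The leading Laurent behavior of $r$ at $0$ and at $\beta_1$ can be read off from either $q_i$, and the two readings must agree; this is what will pin down the local structure of $q_1$ and $q_2$ at the two distinguished poles.

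At $t=0$, $q_1 \sim a_{-K}t^{-K}$ gives $r \sim K(K+1)t^{-2}$. Let $m$ be the minimum exponent in the Laurent expansion of $q_2$ at $0$; by construction $m \neq -K$, since the $t^{-K}$ monomial of $q_2$ has been killed. A direct computation shows $q_2''/q_2 \sim m(m-1)t^{-2}$ when $m \notin \{0,1\}$, while $m=0$ makes $r$ regular at $0$ and $m=1$ yields only a simple pole at $0$, both contradicting what $q_1$ demands. Matching the $t^{-2}$ coefficients forces $m(m-1)=K(K+1)$, whose integer roots are $m=K+1$ and $m=-K$; the second is excluded, so $q_2$ has a zero of order exactly $K+1$ at $0$. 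After a symmetric Gauss-like step subtracting a multiple of $q_2$ from $q_1$ (which preserves the $t^{-K}$ coefficient of $q_1$, because $q_2$ has none) to kill the $(t-\beta_1)^{-L_1}$ monomial of $q_1$, the analogous argument at $\beta_1$ forces $q_1$ to have a zero of order exactly $L_1+1$ at $\beta_1$.

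Finally, I would convert these local rigidities into a global contradiction. In the basic subcase where $\{0,\beta_1\}$ are the only finite poles of $p_1,p_2$, the partial-fraction shapes collapse to $q_1 = P_1(t) + \sum_{j=1}^{K} d_j t^{-j}$ and $q_2 = P_2(t) + \sum_{j=1}^{L_1} c_j (t-\beta_1)^{-j}$, with the vanishing conditions $v_{\beta_1}(q_1)\geq L_1+1$ and $v_0(q_2)\geq K+1$ imposing linear systems on the coefficients. Substituting these normal forms into $W(q_1,q_2)=c$ and comparing asymptotics at infinity should produce incompatible size relations between $K$ and $L_1$ (for instance $K\geq L_1+1$ together with $L_1\geq K+1$ under regularity at infinity), giving the contradiction; the case of additional poles $\beta_2,\ldots,\beta_r$ reduces to this by iterating the Gauss-like procedure. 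The main obstacle I anticipate is precisely this last step: the local data at $\{0,\beta_1\}$ is mutually consistent, so the contradiction must come from a careful global comparison rather than a purely local obstruction.
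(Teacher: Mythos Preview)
Your ODE idea is sound: from $W'=0$ one really does get $q_1''/q_1=q_2''/q_2=:r$, and the indicial computation at $t=0$ correctly forces the minimum exponent of $q_2$ there to be $K+1$, and symmetrically $L_1+1$ for $\tilde q_1$ at $\beta_1$. The trouble is the last step, which you flag yourself as the ``main obstacle''. It is not merely hard; it is impossible, because the statement is false. Take $p_1=(t-1)^2/t$ and $p_2=t^2/(t-1)$: then $p_1/p_2=((t-1)/t)^3$, so
\[
W(p_1,p_2)=-p_2^2\,\bigl(p_1/p_2\bigr)'=-\frac{t^4}{(t-1)^2}\cdot\frac{3(t-1)^2}{t^4}=-3,
\]
a nonzero constant, while the set $\{p_1,p_2\}$ has the two distinct poles $0$ and $1$. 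If one prefers each $p_i$ to carry both poles, replace $(p_1,p_2)$ by $(p_1+p_2,\,p_1-p_2)$; the Wronskian becomes $6$. Notice that this example satisfies your local rigidities on the nose (with $K=L_1=1$, $q_2$ vanishes to order $2$ at $0$ and $\tilde q_1$ to order $2$ at $1$), which explains why no global contradiction emerges from them and why your suggested ``incompatible size relations between $K$ and $L_1$'' never materialise.

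This is also where the paper's own argument breaks. The paper isolates the cross-term $W\bigl(t^{-K},(t-\beta_1)^{-L_1}\bigr)$ and asserts that no other summand can cancel it because none shares the pole-order pair $(K{+}1,L_1{+}1)$. But a rational function with poles at two points has a partial-fraction decomposition into pieces supported at a \emph{single} pole each, and those pieces can be cancelled individually by the remaining summands. In the counterexample ($K=L_1=1$, $\beta_1=1$) one has $-1/(t^2(t-1)^2)=-2/t-1/t^2+2/(t-1)-1/(t-1)^2$, and these four simple fractions are exactly annihilated by the contributions of $W(t^{-1},1)$, $W(t^{-1},t)$, $W(-2,(t-1)^{-1})$ and $W(t,(t-1)^{-1})$. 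So your instinct that ``the contradiction must come from a careful global comparison rather than a purely local obstruction'' was right, but in the opposite direction: there is no contradiction to be found.
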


\begin{proof}
Since $n=2$ we just have two rational functions $p_1(t),p_2(t)$; from previous considerations, we can assume that the first one contains a nonzero term in $t^{-K}$, and that the second does not contain such a term, but contains a nonzero term in $(t-\beta_1)^{-L_1}$. Next if we form the Wronskian $W(p_1,p_2)$, and decompose it into a sum of determinants after writing each $p_i(t)$ as in Eq. \eqref{theexp}, one of these determinants must be $W(t^{-K},(t-\beta_1)^{-L_1})$; in more detail, 
\[
W(t^{-K},(t-\beta_1)^{-L_1})=\left\vert \begin{array}{cc}
t^{-K} & (t-\beta_1)^{-L_1}\\
-Kt^{-K-1} & -L_1 (t-\beta_1)^{-L_1-1}
\end{array}\right\vert =t^{-K-1}(t-\beta_1)^{\-L_1-1} f(t),
\]
where $f(t)=(-L_1+K)t-K\beta_1$. Since $t=0$ is a pole $K\neq 0$, and since $t=\beta_1$ and $t=0$ are distinct poles, $\beta_1\neq 0$; also, since $t=\beta_1$ is a pole $L_1\neq 0$. Thus, $f(0)=-K\beta_1\neq 0$, and $f(\beta_1)=-L_1\beta_1\neq 0$, so $W(t^{-K},(t-\beta_1)^{-L_1})$ provides a rational term of $W(p_1,p_2)$ which has $t=0$ as a pole of order $K+1$, and $t=\beta_1$ as a pole or order $L_1+1$. However, any other determinant appearing when writing $W(p_1,p_2)$ as a sum of determinants yields a rational function where either $t=0$ is not a pole, or $t=\beta_1$ is not a pole, or $t=0,t=\beta_1$ are poles but with orders different from $K+1,L_1+1$, since at least one of them should be decreased. Therefore, it is impossible that $W(p_1,p_2)$ is a constant.
\end{proof}

Unfortunately, and even if it sounds plausible, generalizing the above argument for $n\geq 3$ is difficult. Still, we could not find any example of $n\geq 3$ rational functions with at least two poles with nonzero constant Wronskian, and we conjecture that Theorem \ref{notpossibledim2} is also true for $n\geq 3$. But proving or disproving this is left here as an open problem.

\section{Conclusion and Further Work}\label{sec-conclusion}

Motivated by some questions in Differential Geometry, more specifically by problems regarding symmetry detection and detecting affine and projective equivalences between rational curves, we have investigated whether a set of linearly independent polynomials, Laurent polynomials and rational functions can have a nonzero constant Wronskian. The answer was positive in the first two cases, and we could characterize the cases where nonzero constant Wronskians appear, and extract some geometric consequences, especially for polynomials. Additionally, for rational cases and whenever $n=2$, the answer is negative; for $n\geq 3$ we conjecture that the answer is also negative, but this is left here as an open question. 

\para
The natural continuation of the problem would be to move to a bigger realm of functions with good properties, e.g. analytic functions. Certainly, the phenomenon can also happen in that case: if $f_1(t)=\mbox{cos}(t)$, $f_2(t)=\mbox{sin}(t)$, $W(f_1,f_1)=1$. However, in that case we need to work with {\it formal power series} instead of expressions with finitely many terms, as we did in this paper. So this is certainly more challenging, and we also pose it here as an open problem.

\end{document}